\theoremstyle{plain}
\newtheorem{thm}{Theorem}[section]
\newtheorem{lem}[thm]{Lemma}
\newtheorem{cor}[thm]{Corollary}
\newtheorem{prop}[thm]{Proposition}
\theoremstyle{definition}
\newtheorem{dfn}[thm]{Definition}
\newtheorem{ex}[thm]{Example}
\newtheorem{problem}{Problem}
\theoremstyle{remark}
\newtheorem{rem}[thm]{Remark}
\begin{document}

\title[The global symmetry group and rectification]
{The global symmetry group of first order differential equations and the global rectification theorem}

\author[E.~Gselmann]
{Eszter Gselmann}
\address{Institute of Mathematics, University of Debrecen, 
4010 Debrecen, Pf.\ 12, Hungary}
\email{gselmann@science.unideb.hu}

\author[G.~Horv\'{a}th]
{G\'{a}bor Horv\'{a}th}
\address{Institute of Mathematics, University of Debrecen, 
4010 Debrecen, Pf.\ 12, Hungary}
\email{ghorvath@science.unideb.hu}

\date{July 14, 2019}

\begin{abstract}
Symmetry analysis can provide a suitable change of variables, i.e., 
in geometric terms, a suitable diffeomorphism that simplifies the given direction field, 
which can help significantly in solving or studying differential equations. 
Roughly speaking this is the so-called rectification theorem. 
The local version of this result is a well-known theorem in the field of ordinary differential equations. 
In this note we prove a global counterpart when the equation fulfils the Lipschitz condition. 
Then we use this result to determine the global symmetry group of such an ordinary differential equation. 
It turns out that, 
assuming the Lipschitz condition, 
the full symmetry group is a smooth wreath product of two diffeomorphism groups, 
and does not depend on the form of the equation, at all. 
\end{abstract}

\keywords{
global rectification, global symmetry, global existence and uniqueness theorem, symmetries of differential equations, wreath product, diffeomorphism group
}
\subjclass[2010]{34A12, 34C40, 22E99, 20E22}

\maketitle

\section{Introduction}\label{sec:intro}

Symmetry analysis is one of the most important tools developed to solve differential equations. 
For example, Olver \cite{Olver} provides a rather comprehensive study on the topic with several examples coming from Physics, 
but even Biology can benefit greatly from the theory (see e.g.\ Nucci \cite{Nucci}). 
Finding some symmetries for a differential equation helps to derive an appropriate change of coordinates which would decrease the number of freedom. 
This is done by rectifying the vector field corresponding to the symmetry. 
By the well-known (local) rectification theorem, 
any nonzero vector field can be rectified in a neighbourhood. 
Such local rectifications, however, can not necessarily been patched together into a global rectification. 

This paper derives a global counterpart of the rectification theorem. 
In particular, 
for an open interval $I$, an open set $M \subseteq \mathbb{R}^n$ and a smooth vector field $v \colon I \times M \to \mathbb{R}^n$, 
we prove that the direction field of the equation 
\begin{equation}\label{eq:diffeq}
 \dot{x}(t) = v(t, x) 
\end{equation}
is globally rectifiable, provided that $v$ is Lipschitzian in its second coordinate, 
and that the solutions of \eqref{eq:diffeq} do not leave the domain of definition. 
The second condition sounds quite restrictive at first glance. 
However, in many situations this condition is automatically satisfied, 
e.g.\ when $v$ is defined on $I \times \mathbb{R}^n$, 
or when an autonomous differential equation is defined on a compact manifold. 
Further, 
for several biological examples (see e.g.\ \cite{Tyson} for the basic function components for biological differential equations) the dependent variables are quantities of different molecules or substrates, 
where one can bound the quantities, 
and one knows a priori that none of the solutions can leave the domain determined by these conditions. 
It is interesting to note that many differential equations motivated by biological examples (e.g.\ all of the ones found in \cite{Tyson}) satisfy the Lipschitz condition, 
as well. 

Nevertheless, 
the global rectification theorem is not the main result of the paper. 
Indeed, 
it is not particularly hard to derive some version of it using well-known properties of time-dependent vector fields and flows. 
The main result is, 
that with the help of this global rectification theorem, 
one can determine the full symmetry group of \eqref{eq:diffeq} up to isomorphism. 
It turns out that the full symmetry group does not depend on the form of the equation, 
but rather on $I$ and $M$. 
In particular, 
we prove that 
(provided the conditions of the global rectification theorem hold)
the full symmetry group of \eqref{eq:diffeq} is the smooth wreath product of the full diffeomorphism groups of $I$ and $M$. 
This underlines the importance of the full diffeomorphism groups of different manifolds (or even of $\mathbb{R}^n$) in yet another way. 

This result by itself of course does not help solving ordinary differential equations. 
Our original motivations to determine the global symmetry group of \eqref{eq:diffeq} were twofold. 
Firstly, 
by knowing the global symmetry group, 
one could hope to recognize some new patterns of properties of the differential equation in question. 
Secondly, 
and somewhat related to the first, 
by determining the global symmetry group of several classes of differential equations, 
one can obtain a (possibly new) characterization of such equations, 
namely according to their group of global symmetries. 
Such characterizations on the local level started with Lie, 
and has been gaining an increasing momentum recently (see e.g.~\cite{LieRemarkable}), 
and may help in understanding the underlying properties of the equations better.

Along this line of thought, 
our result shows another aspect of how differently first order and higher order ordinary differential equations behave. 
One such (well-known) aspect is that first order systems have an infinite dimensional global symmetry group, 
while higher order systems have a finite dimensional global symmetry group. 
This is complemented by the consequence of our result. 
Namely, 
that the global symmetry group of first order systems (satisfying the Lipschitz condition) are isomorphic to each other, 
which is certainly not the case for higher order ordinary systems. 
For example, 
the global symmetry group of the differential equation $\ddot{x} = v(x)$ can be either 1-, or 8-dimensional, 
depending on $v$, 
even if $v$ is assumed to be smooth and Lipschitzian, or even bounded (see e.g.\ \cite{PaliathanasisLeach2016} and references therein). 

The paper is structured as follows. 
In Section~\ref{sec:prelim} we recall some necessary notions from differential geometry and from the theory of differential equations. 
These are used to prove the global version of the rectification theorem (Theorem~\ref{thm:global}) in Section~\ref{sec:rect}. 
Moreover, 
in Section~\ref{sec:rect} we recall some consequences of Theorem~\ref{thm:global}, 
and provide some examples that show that the conditions of Theorem~\ref{thm:global} are necessary. 
Then, in Section~\ref{sec:symmetry} we apply the global rectification theorem to determine the full symmetry group of \eqref{eq:diffeq}. 
The main result of the paper is Corollary~\ref{cor:symmetry}. 
Finally, 
we list some open questions in Section~\ref{sec:problems}.

\section{Preliminaries}\label{sec:prelim}

Here we recall some basic facts from differential geometry, 
and some existence and uniqueness theorems for systems of first order ordinary differential equations. 
These theorems can be found e.g.\ in \cite{Arnold, tDieck, Walter2}. 
The main purpose of this section is to fix the terminology and the notations for the paper.

\begin{dfn}
 Let $M, N\subset \mathbb{R}^{n}$ be  manifolds, a \emph{diffeomorphism} is a one-to-one mapping 
$f\colon M\to N$ such that both $f$ and its inverse are smooth. 
Note, that in this paper by \emph{smooth} we mean a continuously differentiable function
(rather than a function that is infinitely many times differentiable). 
For a manifold $M$, 
we denote by $\mathcal{SYM}(M)$ the group of all diffeomorphisms of $M$ onto itself. 
For an open interval $I \subseteq \mathbb{R}$ we say that a diffeomorphism $\Phi \colon I \times M\to I \times N$ is \emph{time-preserving}, 
if $\Phi \left(t, x\right) = \left( t, \varphi \left(t, x \right) \right)$ for some smooth map $\varphi \colon I \times M \to N$. 
\end{dfn}

\begin{dfn}
 Let $f\colon M\to N$ be a smooth mapping 
from the manifold $M$ into the manifold $N$, and let 
$v$ be a vector attached at the point $x\in M$. 
Then at the point $f(x)\in N$ also arises a vector denoted by $f_{\ast x}v$ called 
\emph{the image of the vector $v$ under the mapping $f$} and it is defined by 
\[
 f_{\ast x}v= \left. \dfrac{d}{dt} f\left(\varphi(t)\right)\right|_{t=0}, 
\]
where 
\[
 \varphi(0)=x \quad \text{and }\quad \left. \dfrac{d}{dt} \varphi(t)\right|_{t=0}=v. 
\]
\end{dfn}
 
\begin{dfn}
 The set of velocity vectors of motions leaving the point $x\in M$ is a linear space, called 
the \emph{tangent space} 
of the manifold 
$M$ at the point $x\in M$ and it is denoted by $T_{x}M$. 
The \emph{tangent bundle} of the manifold $M$ is the disjoint union of the tangent spaces
\[
TM = \bigcup_{x \in M} \left\{\, x \,\right\} \times T_x M. 
\]
\end{dfn}

\begin{prop}
 The mapping $f_{\ast x}\colon T_{x}M\to T_{f(x)}N$ is linear. 
The linear mapping $f_{\ast x}$ is called the \emph{derivative} of the function $f$ at $x\in M$. 
\end{prop}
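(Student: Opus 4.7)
The plan is to reduce the definition of $f_{*x}v$ to the classical Fr\'echet derivative via the chain rule, at which point both well-definedness and linearity become automatic. Since by the paper's conventions $M \subseteq \mathbb{R}^n$ and $N \subseteq \mathbb{R}^m$ are (embedded) manifolds and smoothness is $C^1$, the map $f$ admits locally about $x$ a smooth extension $\tilde{f}$ defined on an open neighbourhood $U \subseteq \mathbb{R}^n$ of $x$, with an ordinary Fr\'echet derivative $D\tilde{f}(x) \colon \mathbb{R}^n \to \mathbb{R}^m$ which is linear by construction.

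Next, I would apply the chain rule to a representing curve. Let $v \in T_x M$ and let $\varphi \colon (-\varepsilon, \varepsilon) \to M$ be smooth with $\varphi(0) = x$ and $\varphi'(0) = v$. For $t$ small enough, $\varphi(t) \in U \cap M$, so $f(\varphi(t)) = \tilde{f}(\varphi(t))$, and therefore
\[
 f_{*x}v = \left.\frac{d}{dt} f(\varphi(t))\right|_{t=0} = \left.\frac{d}{dt} \tilde{f}(\varphi(t))\right|_{t=0} = D\tilde{f}(x)\, \varphi'(0) = D\tilde{f}(x)\, v.
\]
The right-hand side depends only on $v$ and not on the curve $\varphi$, so $f_{*x}$ is a well-defined map from $T_x M$ to $\mathbb{R}^m$. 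Moreover, it agrees with the restriction of the linear map $D\tilde{f}(x)$ to $T_x M$.

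Finally, linearity of $f_{*x}$ is inherited from linearity of $D\tilde{f}(x)$: given $v, w \in T_x M$ and $\alpha, \beta \in \mathbb{R}$, the vector $\alpha v + \beta w$ again lies in $T_x M$ (which is a linear subspace of $\mathbb{R}^n$ by the previous definition), and
\[
 f_{*x}(\alpha v + \beta w) = D\tilde{f}(x)(\alpha v + \beta w) = \alpha\, D\tilde{f}(x)v + \beta\, D\tilde{f}(x)w = \alpha f_{*x}v + \beta f_{*x}w.
\]
One must also check that $f_{*x}v$ actually lies in $T_{f(x)}N$, but this follows by taking $\tilde{\varphi}(t) := f(\varphi(t))$ as a curve in $N$ through $f(x)$ with velocity $f_{*x}v$ at $t=0$.

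There is no substantial obstacle; the only point that might need care is independence of the representing curve $\varphi$, and this is precisely what the chain rule delivers. The whole argument is essentially a packaging of the chain rule together with the observation that tangent spaces of embedded manifolds in $\mathbb{R}^n$ are linear subspaces on which the ambient Fr\'echet derivative restricts.
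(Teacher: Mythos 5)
Your argument is correct. Note that the paper itself offers no proof of this proposition: it appears in the Preliminaries as a recalled standard fact, with the reader referred to the cited textbooks. Your reduction to the Fr\'echet derivative of a local extension via the chain rule --- which simultaneously yields well-definedness (independence of the representing curve), linearity, and the fact that the image lands in $T_{f(x)}N$ --- is exactly the standard textbook argument, and it is compatible with the paper's conventions ($M,N$ embedded in Euclidean space, smooth meaning $\mathscr{C}^1$).
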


\begin{dfn}\label{dfn:f_ast}
We say that on the 
manifold 
$M$ \emph{a (smooth) vector field is given} if for all $x\in M$ a vector 
$v(x)\in T_{x}M$ is assigned such that the dependence from $x$ is smooth. 
That is, 
if $v \colon M \to TM$ is a smooth function. 
Assume that a smooth vector field $v$ is given on the 
manifold
$M$, further let 
$f\colon M\to N$ be a diffeomorphism. Then the \emph{image of the vector field $v$ under the diffeomorphism $f$} is defined by
\[
 w(y)= \left(f_{\ast x}\right)v(x) 
\qquad 
\left(y\in N\right), 
\]
where $y=f(x)$. 
\end{dfn}

\begin{thm}\label{thm:equivdiffeq}
 Let $f\colon M\to N$ be a diffeomorphism. Then  the differential equation 
\begin{equation}\label{eq:rect1}
\dot{x}=v(x) 
\qquad 
\left(x\in M\right)
\end{equation}
with phase space $M$ determined by the vector field $v$ is equivalent to the equation 
\begin{equation}\label{eq:rect2}
\dot{y}=\left(f_{\ast x}v\right)(y)
\qquad 
\left(y = f(x)\in N\right)
\end{equation}
with phase space $N$ determined by the vector field $f_{\ast }v$. 
More precisely, a function 
$\varphi\colon I\to M$ is a solution to equation~\eqref{eq:rect1} is and only if 
$f\circ \varphi\colon I\to N$ is a solution to equation~\eqref{eq:rect2}. 
\end{thm}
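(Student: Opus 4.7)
The plan is to reduce the statement to a direct application of the chain rule together with the very definition of $f_{\ast}v$ given in Definition~\ref{dfn:f_ast}. Since $f$ is a diffeomorphism, it suffices to prove just one implication, because the other direction is obtained by applying the same argument to $f^{-1}$ (using the easy observation that $(f^{-1})_{\ast}(f_{\ast}v) = v$).

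First I would fix a solution $\varphi\colon I\to M$ of \eqref{eq:rect1} and set $\psi = f\circ \varphi\colon I\to N$. The key computation is to differentiate $\psi$ at an arbitrary $t_{0}\in I$. By the definition of $f_{\ast}$, taking the curve $\gamma(s) = \varphi(t_{0}+s)$, which has $\gamma(0) = \varphi(t_{0})$ and $\dot{\gamma}(0) = \dot{\varphi}(t_{0})$, we obtain
\[
\dot{\psi}(t_{0}) \;=\; \left.\dfrac{d}{ds}f(\gamma(s))\right|_{s=0} \;=\; f_{\ast \varphi(t_{0})}\dot{\varphi}(t_{0}).
\]
Since $\varphi$ solves \eqref{eq:rect1}, we have $\dot{\varphi}(t_{0}) = v(\varphi(t_{0}))$, so the right-hand side equals $f_{\ast\varphi(t_{0})}v(\varphi(t_{0}))$, which by Definition~\ref{dfn:f_ast} is precisely $(f_{\ast}v)(\psi(t_{0}))$. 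Hence $\psi$ satisfies \eqref{eq:rect2}.

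For the converse implication, I would apply exactly the same argument to the diffeomorphism $f^{-1}\colon N\to M$ and the vector field $f_{\ast}v$ on $N$. The only thing to notice is that $(f^{-1})_{\ast}(f_{\ast}v) = v$, which follows at once from the chain rule applied to $f^{-1}\circ f = \mathrm{id}_{M}$ (so that $(f^{-1})_{\ast f(x)}\circ f_{\ast x}$ is the identity on $T_{x}M$ for every $x\in M$). Given this, the forward direction applied to $\psi$ and $f^{-1}$ shows that $f^{-1}\circ\psi = \varphi$ solves \eqref{eq:rect1}.

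There is essentially no real obstacle here: the whole argument is a bookkeeping exercise converting the chain rule into the language of Definition~\ref{dfn:f_ast}. The only point that requires a small amount of care is making sure that the derivative $\dot{\psi}(t_{0})$ computed in $N$ coincides with $f_{\ast\varphi(t_{0})}\dot{\varphi}(t_{0})$, which is exactly what the definition of the image of a tangent vector is designed to express.
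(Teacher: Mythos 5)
Your proof is correct: the forward direction is exactly the chain rule read through Definition~\ref{dfn:f_ast}, and reducing the converse to the forward direction applied to $f^{-1}$ (using $(f^{-1})_{\ast}(f_{\ast}v)=v$) is sound. Note that the paper itself gives no proof of this statement --- it is recalled in the Preliminaries as a standard fact from the cited textbooks --- so there is nothing to compare against; your argument is the standard one and fills that gap adequately.
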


Now, we continue by briefly reviewing some basic facts from the theory of ordinary differential equations and from real analysis. 
We begin with a generalization of the celebrated result of Picard and Lindel\"{o}f, for further results consult e.g.\ Walter \cite{Walter2}.

\begin{thm}[Picard\,--\,Lindel\"{o}f]\label{thm:PicardLindelof}
 Let $I\subset \mathbb{R}$ be a nonvoid interval, 
 $M \subseteq \mathbb{R}^n$ be an open set, 
$v\colon I\times M \to\mathbb{R}^{n}$ be a continuous function and let us assume that there exists a 
continuous function  
$L\colon I\to [0, +\infty[$ such that 
\[
\tag{$\mathscr{L}$}\label{eq:L}
\left\|v(t, x_{1})-v(t, x_{2})\right\|\leq L(t)\left\|x_{1}-x_{2}\right\| 
\quad 
\left(t\in I, x_{1}, x_{2}\in M \right). 
\]
Let $t_0\in I$, $x_0\in M$, then the Cauchy problem 
\begin{equation}\label{eq:Cauchy}
\left\{
\begin{array}{rcl}
 \dot{x}(t)&=&v(t, x) \\
x(t_0)&=&x_0
\end{array}
\right.
\end{equation}
admits exactly one 
$\varphi\colon I_0\to M$ solution for some interval $I_0 \subseteq I$ containing $t_0$, 
and any other solution is a restriction of this solution. 
Further, if the solutions cannot leave the domain $I\times M$ for some reason 
(e.g.\ when $M = \mathbb{R}^n$), 
then  $I_0=I$ holds, as well, 
i.e.\ in such a case the (unique maximal) solution is defined on the whole $I$. 
\end{thm}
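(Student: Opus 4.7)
The plan is to convert the Cauchy problem~\eqref{eq:Cauchy} into the equivalent integral equation
\begin{equation*}
\varphi(t) = x_{0} + \int_{t_{0}}^{t} v(s, \varphi(s))\, ds
\end{equation*}
and to realise $\varphi$ as the fixed point of the associated Picard operator $T$ via the Banach contraction principle. Since $M$ need not be all of $\mathbb{R}^{n}$, the iteration cannot be run on $C(I, \mathbb{R}^{n})$ directly; one has to first build a local solution in a closed ball contained in $M$ and then extend. For local existence near $t_{0}$, I would pick $r > 0$ with the closed ball $\bar{B}(x_{0}, r) \subset M$ and a compact subinterval $[t_{0} - \delta_{0}, t_{0} + \delta_{0}] \subset I$. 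By continuity, $v$ is bounded by some $K$ on this compact rectangle and $L(s) \leq \Lambda$ there. Shrink $\delta \leq \delta_{0}$ so that $K\delta \leq r$; then $T$ maps the complete metric space of continuous $\psi \colon [t_{0} - \delta, t_{0} + \delta] \to \bar{B}(x_{0}, r)$ (under the sup norm) into itself, and an induction on~\eqref{eq:L} yields
\begin{equation*}
\left\| T^{k}\psi_{1} - T^{k}\psi_{2} \right\|_{\infty} \leq \frac{(\Lambda\delta)^{k}}{k!} \left\| \psi_{1} - \psi_{2} \right\|_{\infty},
\end{equation*}
so some iterate of $T$ is a contraction and Banach's theorem provides a unique local fixed point.

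Uniqueness on arbitrary overlapping intervals then follows from Gronwall's lemma applied to the integral inequality
\begin{equation*}
\left\| \varphi_{1}(t) - \varphi_{2}(t) \right\| \leq \int_{t_{0}}^{t} L(s)\, \left\| \varphi_{1}(s) - \varphi_{2}(s) \right\|\, ds,
\end{equation*}
so the union of the graphs of all local solutions through $(t_{0}, x_{0})$ defines a unique maximal solution $\varphi \colon I_{0} \to M$ on an open subinterval $I_{0} \subseteq I$ containing $t_{0}$, of which every other solution is a restriction.

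Finally, under the additional hypothesis that the solution cannot leave $I \times M$, I would argue by contradiction. Suppose $I_{0} = (a, b)$ with $b \in I$ (the left endpoint is handled symmetrically). The hypothesis confines $(t, \varphi(t))$ to a compact subset of $I \times M$ as $t \to b^{-}$, so $v$ is bounded along the graph; hence $\varphi$ is uniformly continuous and extends to a continuous map on $(a, b]$ with $\varphi(b) \in M$. Applying the local existence result at $(b, \varphi(b))$ then extends $\varphi$ past $b$, contradicting the maximality of $I_{0}$. I expect the hardest step to be this last one: one has to justify that the stated no-escape hypothesis really forces the graph of $\varphi$ into a compact subset of $I \times M$ near the endpoint, and not merely into $I \times M$, since proximity to $\partial M$ would otherwise block the extension.
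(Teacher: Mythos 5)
The paper offers no proof of this theorem: it is recalled as a known result with a pointer to Walter's monograph, so there is nothing internal to compare your argument against. What you propose is the standard proof (Picard iteration on a closed ball $\bar{B}(x_0,r)\subset M$ with the $(\Lambda\delta)^k/k!$ iterate estimate, Gronwall for uniqueness on overlaps, and a continuation argument for the maximal solution), and it is essentially correct. On the one step you flag as delicate, note that condition $(\mathscr{L})$ with continuous $L$ already does most of the work: for $t$ ranging over a compact subinterval $[t_0,b]\subset I$ one has $\|v(t,x)\|\le\|v(t,x_0)\|+L(t)\,\|x-x_0\|$ with $\sup_{[t_0,b]}\|v(\cdot,x_0)\|$ and $\sup_{[t_0,b]}L$ finite, so Gronwall bounds $\|\varphi\|$ on $[t_0,b)$, $v$ is bounded along the graph, and $\varphi$ extends continuously to $b$ with limit $\xi$ lying in the closure of $M$. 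Hence blow-up is impossible and the only obstruction to continuation is $\xi\in\partial M$; the paper's (admittedly informal) hypothesis that solutions ``cannot leave the domain $I\times M$'' is precisely the assumption that excludes this, and with that reading your extension argument at $(b,\xi)$ closes the proof. So your concern is well placed, but it is resolved by interpreting the hypothesis as the intended one rather than by any additional mathematical input.
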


During the proof of the rectification theorem, a `continuous dependence on the inital data' 
type result will also be utilized, c.f.\ Arnol'd \cite{Arnold}.

\begin{thm}[$\mathscr{C}^{r}$ dependence]\label{Thm8}
 Let $r\geq 1$ be arbitrarily fixed, $I\subset \mathbb{R}$ be a nonvoid interval, $t_0\in I$ be fixed, 
 $M$ be a manifold, 
$v\colon I\times M\to\mathbb{R}^{n}$ be a function which fulfils condition~\eqref{eq:L} and assume 
that $v\in \mathscr{C}^{r}(I\times M)$. 
Denote by $\varphi(t, x_0)$ the uniquely determined solution of the Cauchy problem \eqref{eq:Cauchy}
for all $x_0\in M$. 
Then the function 
\[
 I \times M \ni (t, x_0) \longmapsto \varphi(t, x_0)
\]
is $r$ times continuously differentiable. 
\end{thm}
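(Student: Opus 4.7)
The plan is to proceed by induction on $r$, noting that differentiability is a local property. First, using a chart around any chosen point of $M$ and passing to a compact subinterval $J \subset I$ containing $t_0$, I may assume $M \subseteq \mathbb{R}^n$ is open and that $L(t) \leq L_0$ is bounded on $J$. The existence and uniqueness part of Theorem~\ref{thm:PicardLindelof} ensures $\varphi(\cdot, x_0)$ is well defined on $J$ for $x_0$ in a suitable compact neighbourhood $K \subset M$.

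For the base case $r = 1$, I would first establish joint continuity in $(t, x_0)$. Writing the Cauchy problem in integral form, subtracting the analogous equation for a second initial datum $y_0$, and applying $(\mathscr{L})$ gives
\[
\|\varphi(t, x_0) - \varphi(t, y_0)\| \leq \|x_0 - y_0\| + \int_{t_0}^{t} L_0 \, \|\varphi(s, x_0) - \varphi(s, y_0)\|\,ds,
\]
so Gronwall's lemma yields joint continuity on $J \times K$. Next, formal differentiation in $x_0$ suggests that $U(t, x_0) := \partial \varphi/\partial x_0$ ought to solve the matrix-valued linear variational equation
\[
\dot U(t) = D_x v(t, \varphi(t, x_0))\, U(t), \qquad U(t_0) = \mathrm{Id},
\]
whose right-hand side has continuous coefficients (using $v \in \mathscr{C}^1$ and the already established continuity of $\varphi$), hence admits a unique solution on all of $J$ by Theorem~\ref{thm:PicardLindelof}.

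The main obstacle is verifying that this $U$ really is the derivative. To this end, I set $R(t, h) := \varphi(t, x_0 + h) - \varphi(t, x_0) - U(t, x_0) h$ and rewrite it as
\[
R(t, h) = \int_{t_0}^{t} \bigl[ v(s, \varphi(s, x_0 + h)) - v(s, \varphi(s, x_0)) - D_x v(s, \varphi(s, x_0))\, U(s, x_0) h \bigr]\,ds.
\]
Inserting $\pm D_x v(s, \varphi(s, x_0))\bigl(\varphi(s, x_0 + h) - \varphi(s, x_0)\bigr)$, using the mean value theorem and uniform continuity of $D_x v$ on the compact image of $J \times K$ under $(s, x_0) \mapsto (s, \varphi(s, x_0))$, and applying Gronwall once more, one obtains $\|R(t, h)\| = o(\|h\|)$ uniformly on $J$. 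Joint continuity of $U$ in $(t, x_0)$ then follows from classical continuous dependence of solutions of a linear ODE on its continuous parameters.

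For the inductive step, suppose the claim holds for $r - 1$ and assume $v \in \mathscr{C}^{r}$. I consider the augmented system for $(\varphi, U) \in \mathbb{R}^n \times \mathbb{R}^{n \times n}$,
\[
\dot \varphi = v(t, \varphi), \qquad \dot U = D_x v(t, \varphi)\, U, \qquad \varphi(t_0) = x_0,\quad U(t_0) = \mathrm{Id}.
\]
Its right-hand side is jointly $\mathscr{C}^{r-1}$ in $(\varphi, U)$ and Lipschitz in those variables on compact sets, so by the induction hypothesis the pair $(\varphi, U)$ is $\mathscr{C}^{r-1}$ in $(t, x_0)$. Since $\partial_{x_0}\varphi = U$ is $\mathscr{C}^{r-1}$ and $\partial_t \varphi = v(t, \varphi)$ is $\mathscr{C}^{r-1}$ as a composition, I conclude $\varphi \in \mathscr{C}^{r}(I \times M)$, completing the induction.
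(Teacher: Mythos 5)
The paper does not actually prove this statement: Theorem~\ref{Thm8} is recalled in the Preliminaries as a known result with a citation to Arnol'd, so there is no internal proof to compare against. Your argument is the standard textbook proof of $\mathscr{C}^r$ dependence (joint continuity via Gronwall, the variational equation for the candidate derivative $U$, an $o(\|h\|)$ remainder estimate, and induction on $r$ through the augmented system for $(\varphi, U)$), and it is essentially sound. Two points are worth tightening. First, when you apply the mean value theorem to $v(s,\varphi(s,x_0+h)) - v(s,\varphi(s,x_0))$ you need the connecting segment to lie in $M$; this holds for $\|h\|$ small because the set $\{\varphi(s,x_0) : s \in J\}$ is compact in the open set $M$ and the Gronwall bound keeps $\varphi(s,x_0+h)$ uniformly close to it, but you should say so explicitly. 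Second, in the inductive step the right-hand side $(v(t,\varphi), D_xv(t,\varphi)U)$ of the augmented system is only \emph{locally} Lipschitz in $(\varphi,U)$ (Lipschitz in $\varphi$ requires $D_xv$ to be Lipschitz, i.e.\ uses $r \geq 2$, and the $U$-variable ranges over the noncompact space $\mathbb{R}^{n\times n}$), whereas the hypothesis \eqref{eq:L} of the statement you are inducting on is a global one; this is harmless because you have already localized to a compact $J \times K$ on which solutions of the augmented system stay in a fixed compact set, but the reduction should be made explicit so that the induction hypothesis genuinely applies.
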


Further, we will need the Brouwer homeomorphism (also known as the invariance of the domain) theorem, 
see e.g.\ tom Dieck \cite{tDieck}. 

\begin{thm}[Brouwer]\label{thm:Brouwer}
 Let $M\subset \mathbb{R}^{n}$ be a nonempty, open set and 
$f\colon M\to \mathbb{R}^{n}$ be an injective and continuous mapping. 
Then the set $f(M)\subset \mathbb{R}^{n}$ is open, further, 
$f$ is a homeomorphism from $M$ onto $f(M)$. 
\end{thm}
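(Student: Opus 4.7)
The plan is to reduce the statement to the Jordan\,--\,Brouwer separation theorem, which I take as input from algebraic topology. Openness of $f(M)$ is a local property, so it suffices to show that each point $f(x_0)$ lies in the interior of $f(M)$. Fix $x_0 \in M$ and pick a closed ball $\overline{B} = \overline{B_r(x_0)} \subseteq M$. Because $\overline{B}$ is compact and $f|_{\overline{B}}$ is continuous and injective, $f|_{\overline{B}}$ is automatically a homeomorphism onto its image $f(\overline{B})$; in particular $f(\partial B)$ is a topologically embedded $(n-1)$-sphere in $\mathbb{R}^n$ and $f(\overline{B})$ is a topologically embedded closed $n$-disk.

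Jordan\,--\,Brouwer provides two complementary facts about such embeddings. First, $\mathbb{R}^n \setminus f(\partial B)$ splits into its bounded and unbounded components $U$ and $V$, both open, nonempty and connected, with common frontier $f(\partial B)$. Second, $\mathbb{R}^n \setminus f(\overline{B})$ is itself connected. Since the latter set lies in $U \sqcup V$ and is unbounded, it is contained entirely in $V$; passing to complements gives $U \cup f(\partial B) \subseteq f(\overline{B})$, and the disjointness of $U$ from $f(\partial B)$ then yields $U \subseteq f(\overline{B}) \setminus f(\partial B) = f(B^{\circ})$. Conversely, $f(B^{\circ})$ is connected, is contained in $U \sqcup V$, and contains the nonempty set $U$, so $f(B^{\circ}) \subseteq U$. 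Hence $f(B^{\circ}) = U$ is open in $\mathbb{R}^n$ and contains $f(x_0)$, proving that $f(x_0)$ is an interior point of $f(M)$. Applying the same argument to every open subset $W \subseteq M$ shows that $f$ is an open map; a continuous open bijection onto its image is automatically a homeomorphism, so $f \colon M \to f(M)$ is as claimed.

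The main obstacle is of course the Jordan\,--\,Brouwer separation theorem itself, a genuinely topological statement that cannot be derived from the analytic preliminaries of Section~\ref{sec:prelim}. The standard route establishes it via singular (or \v{C}ech) homology, inductively computing $H_{\ast}(\mathbb{R}^n \setminus K)$ for topologically embedded disks and spheres $K$ through a Mayer\,--\,Vietoris argument on a decomposition of $K$ into two pieces. One can alternatively bypass the separation statement and prove invariance of domain directly, by computing the local homology $H_n(\mathbb{R}^n, \mathbb{R}^n \setminus \{p\}) \cong \mathbb{Z}$ and transporting it under $f|_{\overline{B}}$ via excision, but either route relies on the same algebraic\,--\,topological input. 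This is why, in the body of the paper, the theorem will be quoted from tom Dieck rather than proved from scratch.
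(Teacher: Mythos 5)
The paper does not prove Theorem~\ref{thm:Brouwer} at all: it is stated as a classical result and quoted from tom Dieck \cite{tDieck}, so there is no in-paper argument to compare yours against. Your sketch is the standard derivation of invariance of domain from the Jordan--Brouwer separation theorem together with the connectedness of the complement of an embedded disk, and it is correct: squeezing $f(B^{\circ})$ between the two inclusions to identify it with the bounded complementary component $U$ of $f(\partial B)$ is exactly the classical argument, and it is essentially the route taken in \cite{tDieck} itself, so you are consistent with the source the paper points to. Two minor remarks. First, your phrasing of Jordan--Brouwer in $\mathbb{R}^{n}$ (``bounded and unbounded components, both connected'') fails for $n=1$, where the complement of two points has three components and the unbounded part is disconnected; either dispose of $n=1$ separately (a continuous injection of an interval is strictly monotone, hence open) or run the whole argument in $S^{n}$, where the two-component statement is uniform in $n$ and the complement of the embedded disk is the second component outright. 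Second, you correctly identify that the genuine content is the homological computation of $H_{\ast}\bigl(\mathbb{R}^{n}\setminus K\bigr)$ for embedded disks and spheres $K$; since the paper only ever uses the theorem as a black box (in step (iii) of the proof of Theorem~\ref{thm:global}), deferring that input to the literature, as you do, matches the paper's own treatment.
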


\section{The global rectification theorem}\label{sec:rect}

\begin{dfn}
A \emph{rectification} of a direction field is a diffeomorphism mapping the direction field into a field of parallel directions. 
A field is said to be \emph{rectifiable} if there exists a rectification for it. 
\end{dfn}

A direction field of parallel directions is easily integrable. 
Thus, if a direction field is rectifiable, 
then (as every first order ordinary differential equation is equivalent to an autonomous one)
one can apply Theorem~\ref{thm:equivdiffeq} to solve the original equation. 

\begin{thm}[Global rectification]\label{thm:global}
 Let $I\subset \mathbb{R}$ be a nonempty open interval and 
$M\subset \mathbb{R}^{n}$ be an open set, 
$v \colon I\times M \to \mathbb{R}^n$ be an $r$ times continuously differentiable function for some $r \geq 1$ such that condition~\eqref{eq:L} is fulfilled. 
Further, 
assume that for every $t_0 \in I$, $x_0 \in M$ the unique maximal solution of the Cauchy problem \eqref{eq:Cauchy},
guaranteed by the Picard\,--\,Lindel\"{o}f theorem (Theorem~\ref{thm:PicardLindelof}), 
cannot leave the domain $I \times M$. 
Then the direction field of the equation 
\[
 \dot{x}=v(t, x)
\]
is globally rectifiable by a time-preserving diffeomorphism from the class $\mathscr{C}^{r}$. 
In particular, 
there exists a $\mathscr{C}^{r}$-diffeomorphism $\Phi \colon I \times M \to I \times M$ such that 
\[
\left( \Phi^{-1} \right)_{\ast (t, x)} v(t, x)  = 1\dfrac{\partial }{\partial t}+0\dfrac{\partial}{\partial x}.
\]
\end{thm}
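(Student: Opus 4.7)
The plan is to construct the rectifying diffeomorphism explicitly from the flow of the equation. Fix an arbitrary $t_0 \in I$ once and for all. For each $x_0 \in M$, let $\varphi(\cdot, x_0) \colon I \to M$ denote the unique maximal solution of the Cauchy problem \eqref{eq:Cauchy}; by the non-escape hypothesis together with Theorem~\ref{thm:PicardLindelof}, this solution is defined on the whole interval $I$. Define
\[
 \Psi \colon I \times M \longrightarrow I \times M, \qquad \Psi(t, x_0) = \bigl(t, \varphi(t, x_0)\bigr).
\]
Then $\Psi$ is time-preserving by construction, and Theorem~\ref{Thm8} guarantees that $\Psi$ is of class $\mathscr{C}^{r}$. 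Our eventual rectifying map will be $\Phi := \Psi^{-1}$.

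Next, I would show that $\Psi$ is a bijection of $I \times M$ onto itself. Injectivity is immediate from uniqueness in Theorem~\ref{thm:PicardLindelof}: if $\varphi(t, x_0) = \varphi(t, x_1)$, then the two solutions $\varphi(\cdot, x_0)$ and $\varphi(\cdot, x_1)$ coincide at the time $t$, hence everywhere on $I$, and in particular at $t_0$, giving $x_0 = x_1$. For surjectivity, fix $(t, y) \in I \times M$ and solve the Cauchy problem $\dot{x}=v(s,x)$, $x(t)=y$; again by Theorem~\ref{thm:PicardLindelof} and the non-escape assumption, the maximal solution $\psi$ is defined on all of $I$, so setting $x_0 := \psi(t_0)$ we obtain by uniqueness that $\psi = \varphi(\cdot, x_0)$, hence $\Psi(t, x_0) = (t, y)$.

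Smoothness of $\Psi^{-1}$ is the step that requires the most care. I would handle it by applying Theorem~\ref{Thm8} to the \emph{reverse-time} family: the inverse of the time-$t$ flow map $x_0 \mapsto \varphi(t, x_0)$ is obtained by solving the Cauchy problem starting at time $t$ and running to time $t_0$, and this solution depends $\mathscr{C}^{r}$-smoothly on the initial data $(t, y)$ by the same $\mathscr{C}^{r}$-dependence theorem applied with $t$ playing the role of a parameter. Alternatively, one can first invoke the Brouwer invariance of the domain (Theorem~\ref{thm:Brouwer}) to conclude that $\Psi$ is a homeomorphism, and then promote this to a $\mathscr{C}^{r}$-diffeomorphism via the inverse function theorem, after checking that the Jacobian of $\Psi$ is nonsingular (its block-triangular structure reduces this to the nonsingularity of $\partial_{x_0}\varphi(t, x_0)$, which holds because the time-$t$ flow is a local diffeomorphism by the $\mathscr{C}^{r}$-dependence theorem). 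Either route yields that $\Psi$, and therefore $\Phi := \Psi^{-1}$, is a time-preserving $\mathscr{C}^{r}$-diffeomorphism.

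Finally, I would verify the rectification identity. For each fixed $x_0$, the curve $t \mapsto \Psi(t, x_0) = (t, \varphi(t, x_0))$ has tangent vector $\tfrac{\partial}{\partial t} + v(t, \varphi(t, x_0)) \tfrac{\partial}{\partial x}$ at the point $\Psi(t, x_0)$, since $\varphi(\cdot, x_0)$ solves $\dot x = v(t,x)$. Hence $\Psi_{\ast (t, x_0)} \bigl( \tfrac{\partial}{\partial t} \bigr) = \tfrac{\partial}{\partial t} + v(t, \varphi(t, x_0)) \tfrac{\partial}{\partial x}$, which rewritten via $\Phi = \Psi^{-1}$ gives exactly $\bigl(\Phi^{-1}\bigr)_{\ast (t,x)} v(t,x) = \tfrac{\partial}{\partial t} + 0 \tfrac{\partial}{\partial x}$, as required. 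I expect the principal obstacle to be the clean handling of the smoothness of the inverse map (ensuring that the regularity $\mathscr{C}^{r}$ is preserved globally, not merely locally), which is why the non-escape hypothesis is essential: it is what lets us apply Theorem~\ref{Thm8} to the backward flow on the whole of $I \times M$.
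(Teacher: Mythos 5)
Your proposal is correct and follows essentially the same route as the paper: both fix an initial time $t_0$, build the flow map $(t,x_0)\mapsto(t,\varphi(t,x_0))$, prove injectivity and surjectivity from the Picard--Lindel\"{o}f theorem (Theorem~\ref{thm:PicardLindelof}) together with the non-escape hypothesis, get $\mathscr{C}^{r}$-regularity from Theorem~\ref{Thm8}, and pass from the Brouwer homeomorphism (Theorem~\ref{thm:Brouwer}) to a diffeomorphism via the nonsingularity of the block-triangular Jacobian --- your handling of the smoothness of the inverse is, if anything, more careful than the paper's one-line remark. The only quibble is notational: since the statement asks for $\left(\Phi^{-1}\right)_{\ast(t,x)}v(t,x)=1\frac{\partial}{\partial t}+0\frac{\partial}{\partial x}$, the theorem's $\Phi$ is your forward map $\Psi$ itself (its inverse being the rectifier), so setting $\Phi:=\Psi^{-1}$ flips the roles in your final identity.
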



\begin{proof}
Fix $t_{0}\in I$. Further, 
let $x_0\in M$ be arbitrarily fixed and let us consider the Cauchy problem \eqref{eq:Cauchy}
\[
\left\{
\begin{array}{rcl}
 \dot{x}(t)&=&v(t, x)\\
x(t_{0})&=&x_0. 
\end{array}
\right.
\]
Due to the theorem of Picard and Lindel\"{o}f (Theorem~\ref{thm:PicardLindelof}), there exists a uniquely determined solution to this problem, say, 
$\varphi(t, x_0)$ on the whole $I$. Furthermore, the mapping 
\[
 M\ni x_0\longmapsto \varphi(t, x_0)
\]
does exist and it depends on $x_0$ in a continuously differentiable way. 
Additionally, due to 
Theorem \ref{Thm8}, we also have $\varphi\in \mathscr{C}^{r}(I\times M)$. 
Now, let us consider the mapping $\Phi\colon I\times M\to I\times \mathbb{R}^{n}$ defined by 
\[
 \Phi(t, x_0)= \left(t, \varphi(t, x_0)\right) 
\qquad 
\left((t, x_0)\in I\times M\right). 
\]
We will show that $\Phi$ is bijective on $I \times M$, and 
its inverse
is the desired rectification of the direction field. 
In particular, we have that
\begin{enumerate}[(i)]
\item\label{it:rect0} 
the mapping $\Phi$ is surjective onto $I \times M$, 
  \item\label{it:recti} $\Phi$ is differentiable, and it  even belongs to the class 
$\mathscr{C}^{r}(I\times M)$, 
\item\label{it:rectii} $\Phi_{\ast x}$ maps the vector field
$1\dfrac{\partial }{\partial t}+0\dfrac{\partial}{\partial x}$ into the vector field 
$1\dfrac{\partial }{\partial t}+v\dfrac{\partial }{\partial x}$, 
\item\label{it:rectiii} $\Phi$ is a diffeomorphism on $I\times M$. 
\end{enumerate}

First, we prove (\ref{it:rect0}). 
Let $(\tau, \xi) \in I \times M$ be arbitrary, 
and consider the Cauchy problem 
\[
\left\{
\begin{array}{rcl}
 \dot{x}(t)&=&v(t, x)\\
x(\tau)&=&\xi. 
\end{array}
\right.
\]
By the Picard\,--\,Lindel\"{o}f theorem (Theorem~\ref{thm:PicardLindelof}), 
this has a uniqe maximal solution $\varphi$ on the whole $I$. 
In particular, 
$\varphi$ attains some value $x_0$ at $t_0$ (i.e.\ $\varphi(t_0)=x_0$). 
Thus the Cauchy problem 
\[
\left\{
\begin{array}{rcl}
 \dot{x}(t)&=&v(t, x)\\
x(t_0)&=&x_0. 
\end{array}
\right.
\]
has the unique solution $\varphi$, as well. 
Therefore $\Phi \left( \tau, \varphi \left( \tau, x_0 \right) \right) = \left( \tau, \xi \right)$. 

Now, (\ref{it:recti}) follows immediately from Theorem \ref{Thm8}. 
Further, 
every obtained $\varphi$ is a solution of our ordinary diffe\-rential equation, 
hence (\ref{it:rectii}) clearly holds. 

Finally, we prove (\ref{it:rectiii}).
By (\ref{it:recti}) we have $\Phi\in \mathscr{C}^{r}(I\times M)$. 
Due to the uniqueness part of the Picard\,--\,Lindel\"{o}f theorem (Theorem~\ref{thm:PicardLindelof}), 
$\Phi$ is injective. 
From (\ref{it:rect0}) we have that $\Phi$ is surjective onto $I \times M$. 
Due to the theorem of Brouwer (Theorem~\ref{thm:Brouwer}), 
$\Phi$ is a differentiable homeomorphism. 
Since $\Phi_{\ast x}$ is non-vanishing, 
$\Phi$ is a diffeomorphism. 
\end{proof}

In case the continuous function $v$ does not fulfil condition~\eqref{eq:L}, 
we can only expect local and in general nonunique solvability. 
This might suggest that in such a case only the local version of the rectification theorem is valid, 
which follows easily from its global counterpart. 

\begin{cor}[Local rectification]\label{cor:local}
Every smooth direction field is rectifiable in a neighbourhood of each of its points. 
If the field is $r$ times continuously differentiable, then the rectifying diffeomorphism can also be taken from the class $\mathscr{C}^{r}$. 
\end{cor}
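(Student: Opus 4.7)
The plan is to deduce the local rectification from Theorem~\ref{thm:global} by cutting $v$ off outside a small neighbourhood of a fixed point $(t_0, x_0) \in I \times M$, so that the modified vector field lives on a product domain where the no-escape hypothesis is trivial and condition~\eqref{eq:L} holds globally.

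First, I would pick an open interval $I' \subset I$ containing $t_0$ whose closure is compact in $I$, and a ball $B(x_0, R) \subset M$ with $\overline{B(x_0, R)} \subset M$. Since $v$ is $\mathscr{C}^r$ with $r \geq 1$, its spatial derivative is continuous and therefore bounded on the compact set $\overline{I'} \times \overline{B(x_0, R)}$, so $v$ already satisfies condition~\eqref{eq:L} on $I' \times B(x_0, R)$ with a constant (and therefore continuous) Lipschitz function.

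Next, I would choose a $\mathscr{C}^r$ cutoff $\chi \colon \mathbb{R}^n \to [0, 1]$ equal to $1$ on $B(x_0, R/2)$ and vanishing outside $B(x_0, R)$, and define $\tilde{v} \colon I' \times \mathbb{R}^n \to \mathbb{R}^n$ by $\tilde{v}(t, x) := \chi(x) v(t, x)$ for $x \in B(x_0, R)$ and $\tilde{v}(t, x) := 0$ otherwise. Then $\tilde{v}$ belongs to $\mathscr{C}^r(I' \times \mathbb{R}^n)$, is compactly supported in $x$ uniformly in $t \in I'$, and consequently satisfies condition~\eqref{eq:L} on all of $I' \times \mathbb{R}^n$. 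Moreover, since the spatial domain is the whole of $\mathbb{R}^n$, no solution of $\dot{x} = \tilde{v}(t, x)$ can leave $I' \times \mathbb{R}^n$, so the no-escape hypothesis of Theorem~\ref{thm:global} is automatic.

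Applying Theorem~\ref{thm:global} to $\tilde{v}$ on $I' \times \mathbb{R}^n$ yields a time-preserving $\mathscr{C}^r$-diffeomorphism $\tilde{\Phi} \colon I' \times \mathbb{R}^n \to I' \times \mathbb{R}^n$ whose inverse rectifies the direction field of $\tilde{v}$. On the open neighbourhood $U := I' \times B(x_0, R/2)$ of $(t_0, x_0)$ we have $\tilde{v} \equiv v$, so the restriction of $\tilde{\Phi}^{-1}$ to $U$ is a $\mathscr{C}^r$ diffeomorphism rectifying the original direction field on $U$. The only real point to check is that the cutoff preserves the $\mathscr{C}^r$ regularity and the global Lipschitz estimate while coinciding with $v$ on a genuine open neighbourhood of $(t_0, x_0)$, which is a standard partition-of-unity construction; everything else is bookkeeping.
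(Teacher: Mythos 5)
Your proposal is correct, and it takes a genuinely different route from the paper. The paper simply restricts $v$ to a small compact convex box $J\times C$ around $(t_0,x_0)$ on which the Lipschitz condition holds, and then invokes Theorem~\ref{thm:global} on $J^{\circ}\times C^{\circ}$; it never addresses the second hypothesis of that theorem, namely that solutions must not leave the domain, which in general fails for a restriction to a small box (a solution typically exits through the spatial boundary of $C^{\circ}$ before $t$ reaches the ends of $J^{\circ}$). Your cutoff construction --- multiplying $v$ by a bump function supported in $B(x_0,R)$ and extending by zero to $I'\times\mathbb{R}^{n}$ --- is precisely the device that makes the no-escape hypothesis automatic (the spatial domain becomes all of $\mathbb{R}^{n}$) while keeping condition~\eqref{eq:L} global, since $\tilde{v}$ has compactly supported, hence bounded, spatial derivative. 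The price is the routine verification that $\chi v$ glued with $0$ is $\mathscr{C}^{r}$ across $\partial B(x_0,R)$ and that $\tilde{v}\equiv v$ on the smaller neighbourhood $I'\times B(x_0,R/2)$, both of which you correctly flag as standard. In short, your argument is slightly longer but actually verifies all the hypotheses of Theorem~\ref{thm:global}, whereas the paper's own proof leaves a gap on the no-escape condition that your approach closes.
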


\begin{proof}
 Let us assume that the differential equation corresponding to the direction field is 
\[
 \dot{x}=v(t, x) 
\qquad 
\left(t \in I, x\in M\right), 
\]
where $I\subset \mathbb{R}$ is a nonempty open interval, $M\subset \mathbb{R}^{n}$ is an open set and 
$v\in \mathscr{C}^{r}(I\times M)$. 

Let $(t_{0}, x_{0})\in I\times M$ be arbitrarily fixed. Then there exists a compact and 
convex subset $J\times C\subset I\times M$ such that 
$\left. v\right|_{J\times C}$ is a Lipschitz function, with Lipschitz constant  
$ L= \sup_{
J\times C}\left\|v_{\ast x} \right\|$. 
Then 
$ L\geq \sup_{
J^{\circ}\times C^{\circ}}\left\|v_{\ast x} \right\|$, 
where $S^{\circ}$ denotes the interior of the set $S$. This shows that 
the restriction of $v$ to the open set $J^{\circ}\times C^{\circ}$ is a Lipschitz function and 
$v$ is continuous on this set, too. Thus, the assumptions of the Global rectification theorem (Theorem~\ref{thm:global}) are fulfilled. 
Therefore the direction field on $J^{\circ}\times C^{\circ}$ is rectifiable by a $\mathscr{C}^{r}$-diffeomorphism. Since 
$(t_{0}, x_{0})\in I\times M$ was arbitrary, we obtain that every point of the extended direction field 
admits a neighbourhood in which the direction field is rectifiable. 
\end{proof}

It is well-known that more regularity of the right hand side of the 
differential equation \emph{does not necessarily impose} existence and uniqueness, even if the 
right hand side of the equation is analytic. 

\begin{ex}
Consider the Cauchy problem 
\begin{equation}\label{eq:Cauchy2}
\left\{
 \begin{array}{rcl}
  \dot{x}(t)&=&x^{2}(t)\\
x(0)&=&x_{0}. 
\end{array}
\right.
\qquad 
\left(t\in \mathbb{R}\right). 
\end{equation}
Here, the mapping $v\colon \mathbb{R}\times \mathbb{R}\to \mathbb{R}$
defined by 
$ v(t, x)= x^{2} $
is of the class $\mathscr{C}^{\omega}(\mathbb{R}\times \mathbb{R})$, 
but 
\eqref{eq:Cauchy2} has a unique maximal solution only on 
$]-\infty, x_{0}^{-1}[$. 
Namely, 
$ x(t)= \dfrac{1}{\frac{1}{x_{0}}-t}$, 
$t\in \left]-\infty, x_{0}^{-1}\right[$. 
Moreover, 
\eqref{eq:Cauchy2} does not have a solution that is defined on the whole line $\mathbb{R}$. 
Here, the right hand side of the differential equation does not fulfil condition 
\eqref{eq:L}, but it is continuously differentiable. 
Thus, as this example shows, there are differential equations whose direction field is 
locally rectifiable at every point, but globally not. 
\end{ex}

\begin{ex}
Let us 
consider the Cauchy problem 
\begin{equation}\label{eq:Cauchy3}
\left\{
 \begin{array}{rcl}
  \dot{x}(t)&=&2\sqrt{\left| x(t)\right|}\\
x(0)&=&0. 
 \end{array}
\right.
\end{equation}
It is easy to see that for any $\alpha\geq 0$ the function 
\[
 x_{\alpha}(t)=
\begin{cases}
0, & t\leq \alpha\\
(t-\alpha)^{2}, & t\geq \alpha 
\end{cases}
\]
solves the initial value problem ~\eqref{eq:Cauchy3}. 
Thus, this Cauchy problem does not admit a unique solution, 
due to the fact that the mapping $(t,x) \longmapsto 2\sqrt{| x |}$
does not fulfil condition \eqref{eq:L}. 
Therefore, the direction field of this differential equation is not globally rectifiable. 
However, every point $(x, y)$ of the plane for which $x^{2}+y^{2}\neq 0$ does admit a neighbourhood in which the 
direction field of the equation is rectifiable. 
\end{ex}

\begin{rem}
Note, that even condition \eqref{eq:L} can be weakened. 
By Osgood \cite{Osgood}, 
it is enough to assume that 
\[
\tag{$\mathscr{L'}$}\label{eq:L'}
\left\|v(t, x_{1})-v(t, x_{2})\right\|\leq L(t)\omega \left( \left\|x_{1}-x_{2}\right\| \right)
\quad 
\left(t\in I, x_{1}, x_{2}\in M \right), 
\]
where $\omega \colon \left[ 0, +\infty \right[ \to \left[ 0, +\infty \right[$ is a nondecreasing function, 
$\omega (0) =0$, $\omega(\xi) >0$ for $\xi >0$ and satisfies \emph{Osgood's condition}, 
that is 
\[
\int_0^1 \frac{d \xi}{\omega(\xi)} = \infty. 
\]
However, 
in most of the situations condition \eqref{eq:L} is enough. 
\end{rem}

\section{The global symmetry group of first order systems}\label{sec:symmetry}

Using \cite{Olver}, 
we recall the definition of the action of a smooth map on a function, 
and then define the global version of the Lie symmetry group of \eqref{eq:diffeq} by requiring all transformations having full domain. 
These definitions can be extended to a more general setting, 
but for the purposes of our paper, they suffice.

\begin{dfn}[{\cite[Section~2.2]{Olver}}]
Let $I\subset \mathbb{R}$ be a nonempty open interval and 
$M\subset \mathbb{R}^{n}$ be an open set. 
Let $\alpha \colon I \times M \to I \times M$ be a smooth map, 
and let $\varphi \colon J \to M$ be a smooth function for some $J \subseteq I$. 
Let $\Gamma_{\varphi} = \left\{\left( t, \varphi \left( t \right) \right) : t \in J \right\}$ denote the graph of $\varphi$. 
If the transform of $\Gamma_{\varphi}$ by $\alpha$ 
(i.e.\ $\alpha \circ \Gamma_{\varphi} = \left\{\alpha \left(  t, \varphi \left( t  \right) \right) : t \in J \right\}$) is a graph of a function $\tilde\varphi$, 
then we write $\tilde\varphi = \alpha \circ \varphi$, 
we say $\alpha \circ \varphi$ \emph{is defined}, 
and call $\tilde\varphi$ the \emph{transform} of $\varphi$ by $\alpha$. 
\end{dfn}

\begin{dfn}[Global version of {\cite[Definition~2.23]{Olver}}]
Let $I\subset \mathbb{R}$ be a nonempty open interval and 
$M\subset \mathbb{R}^{n}$ be an open set, 
$v \colon I\times M \to \mathbb{R}^n$ be a 
continuously differentiable function. 
We say that a bijective, smooth map $\alpha \colon I \times M \to I \times M$ is a \emph{global symmetry} of the equation \eqref{eq:diffeq} if whenever $x = \varphi (t)$ is a solution of \eqref{eq:diffeq} and $\alpha \circ \varphi$ is defined, 
then $\alpha\circ \varphi$ is also a solution of \eqref{eq:diffeq}. 
(By \emph{solution} we mean any smooth function $\varphi \colon J \to M$ for some open interval $J \subseteq I$.)
The \emph{global symmetry group} of \eqref{eq:diffeq} is the 
group consisting of all bijective, smooth transformations of $I \times M$ that are symmetries of \eqref{eq:diffeq}. 
\end{dfn}

By \cite[Definition~1.25]{Olver}, 
a \emph{global group of transformations} of $I \times M$ is a subgroup of $\mathcal{SYM}(I \times M)$.
That is, 
the global symmetry group is the unique largest global group of transformations acting on $I \times M$ moving solutions of \eqref{eq:diffeq} to solutions of \eqref{eq:diffeq}. 


\begin{thm}\label{thm:symmetry}
Let $I\subset \mathbb{R}$ be a nonempty open interval and 
$M\subset \mathbb{R}^{n}$ be an open set, 
$v \colon I\times M \to \mathbb{R}^n$ be a continuously differentiable function such that the conditions of Theorem~\ref{thm:global} are fulfilled. 
Then the global symmetry group $\mathcal{G}$ of the equation~\eqref{eq:diffeq} 
is isomorphic to the group
\[
\left\{\, h \in \mathcal{SYM} (I \times M) \mid h(t, x) =(f(t, x), g(x)), t \in I, x \in M \,\right\}, 
\]
where the group multiplication is function composition. 
\end{thm}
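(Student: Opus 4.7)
The plan is to invoke the global rectification theorem (Theorem~\ref{thm:global}) to reduce the problem to the trivial equation $\dot{x} = 0$, for which the symmetry group can be analyzed directly. First, I would apply Theorem~\ref{thm:global} to obtain a time-preserving $\mathscr{C}^{r}$-diffeomorphism $\Phi \colon I \times M \to I \times M$ that rectifies the direction field of~\eqref{eq:diffeq}. By Theorem~\ref{thm:equivdiffeq}, the map $\Phi$ sets up a one-to-one correspondence between solutions of~\eqref{eq:diffeq} and solutions of $\dot{x} = 0$. I would then observe that a bijective smooth map $\alpha \colon I \times M \to I \times M$ is a global symmetry of~\eqref{eq:diffeq} if and only if $\tilde{\alpha} := \Phi^{-1} \circ \alpha \circ \Phi$ is a global symmetry of $\dot{x} = 0$, so that conjugation by $\Phi$ yields a group isomorphism between $\mathcal{G}$ and the global symmetry group $\mathcal{G}_{0}$ of $\dot{x} = 0$.

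The problem thus reduces to identifying $\mathcal{G}_{0}$ with the triangular subgroup
\[
T := \left\{\, h \in \mathcal{SYM}(I \times M) \mid h(t, x) = (f(t, x), g(x)) \,\right\}.
\]
The inclusion $T \subseteq \mathcal{G}_{0}$ is direct: if $h = (f, g) \in T$ and $\varphi(t) = c$ is a constant solution on an open subinterval $J \subseteq I$, then the transform $h \circ \Gamma_{\varphi} = \{(f(t, c), g(c)) : t \in J\}$ is the graph of the constant function with value $g(c)$ over the open interval $\{f(t, c) : t \in J\}$, and therefore is again a solution of $\dot{x} = 0$.

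For the reverse inclusion $\mathcal{G}_{0} \subseteq T$, let $\alpha = (A, B) \in \mathcal{G}_{0}$ be arbitrary. For each $c \in M$, the constant function $\varphi_{c} \colon I \to M$, $\varphi_{c}(t) = c$, is a solution defined on all of $I$, and its image under $\alpha$ must still be the graph of a solution of $\dot{x} = 0$. This forces $t \mapsto A(t, c)$ to be an injection of $I$ into $I$ with open image, and $B(t, c)$ to be constant in $t$. Setting $g(c)$ to be the common value of $B(\cdot, c)$ and $f(t, x) := A(t, x)$ gives $\alpha(t, x) = (f(t, x), g(x))$; bijectivity of $\alpha$ then promotes $g$ to a diffeomorphism of $M$ and $f(\cdot, x)$ to a diffeomorphism of $I$ for each $x \in M$, so that $\alpha \in T$.

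The main subtlety is the careful interpretation of the condition ``$\alpha \circ \varphi$ is defined'' in the definition of a global symmetry: one must argue that for a genuine symmetry the transform of a maximal solution is a function graph on its full natural domain, not merely on small pieces, so that the local information $\partial_{t} B(\cdot, c) = 0$ can be promoted to the global conclusion that $B$ is independent of $t$. The global nature of the rectification—guaranteed by the standing hypothesis that solutions do not leave $I \times M$—is precisely what makes every constant $\varphi_{c}$ a bona fide solution on all of $I$, so that this promotion is legitimate and the triangular form of $\alpha$ is forced.
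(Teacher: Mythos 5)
Your proposal follows the paper's proof essentially verbatim: rectify via Theorem~\ref{thm:global}, transport the symmetry group by conjugation with the time-preserving diffeomorphism $\Phi$ (a step the paper isolates as Lemma~\ref{lem:symmetry}), and then identify the symmetries of the trivial equation $\dot{x}=0$ as exactly the bijective smooth maps whose second coordinate is independent of $t$. The subtlety you flag at the end --- that the definition only constrains $\alpha$ on solutions $\varphi$ for which $\alpha\circ\varphi$ is actually defined, so one must still rule out maps that send every horizontal line to a non-graph --- is passed over silently in the paper as well, so your treatment is, if anything, marginally more explicit about where the remaining care is needed.
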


For proving Theorem~\ref{thm:symmetry}, 
we first prove that any two differential equations satisfying the conditions of Theorem~\ref{thm:global} have isomorphic symmetry groups. 
Then we determine the symmetry group of the trivial equation $\dot{x}=0$. 

\begin{lem}\label{lem:symmetry}
 Let $I \subset \mathbb{R}$ be a nonempty open interval and 
$M_1, M_2\subset \mathbb{R}^{n}$ be open sets, 
$v_j \colon I \times M_j \rightarrow \mathbb{R}^n$ ($j \in \left\{\, 1, 2 \, \right\}$) be continuously differentiable functions for which 
the direction fields of the equations
\[
 \dot{x}=v_1(t, x) 
 \quad 
 \text{and} 
 \quad 
 \dot{x}=v_2(t, x)
\]
are diffeomorphic by a time-preserving diffeomorphism $\Phi \colon I \times M_1 \rightarrow I \times M_2$. 
If the global symmetry group of $ \dot{x}=v_j(t, x) $ is denoted by $\mathcal{G}_j$ ($j \in \left\{\, 1, 2 \, \right\}$), 
then 
\[
\mathcal{G}_2 = \left\{\, \Phi \circ \alpha \circ \Phi^{-1} \mid \alpha \in \mathcal{G}_1 \, \right\}.
\]
\end{lem}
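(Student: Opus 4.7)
The plan is to establish $\mathcal{G}_2 = \Phi \mathcal{G}_1 \Phi^{-1}$ by proving the inclusion $\Phi \mathcal{G}_1 \Phi^{-1} \subseteq \mathcal{G}_2$; the opposite inclusion follows by running the same argument with the roles of $(v_1, M_1)$ and $(v_2, M_2)$ swapped and with $\Phi$ replaced by the time-preserving diffeomorphism $\Phi^{-1}$. That $\beta := \Phi \circ \alpha \circ \Phi^{-1}$ is a smooth bijection of $I \times M_2$ whenever $\alpha \in \mathcal{G}_1$ is immediate from $\Phi$ being a diffeomorphism, so the whole content of the lemma reduces to showing that $\beta$ sends solutions of $\dot{x} = v_2(t,x)$ to solutions whenever the transform is defined.

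The argument rests on two observations. First, a \emph{graph-preservation} property of time-preserving maps: since $\Phi(t,x) = (t, \varphi(t,x))$ acts fiberwise over $I$, a subset $S \subseteq I \times M_1$ is the graph of a function $J \to M_1$ if and only if $\Phi(S) \subseteq I \times M_2$ is the graph of a function $J \to M_2$; in particular, $\Phi \circ \chi$ and $\Phi^{-1} \circ \psi$ are \emph{unconditionally} defined, and the transform $\beta \circ \psi$ is defined if and only if $\alpha \circ (\Phi^{-1} \circ \psi)$ is. Second, a \emph{solution-transport} property: since $\Phi$ is a time-preserving diffeomorphism of the direction fields, Theorem~\ref{thm:equivdiffeq} applied to the autonomous vector fields $\partial_t + v_j \partial_x$ on $I \times M_j$ yields that $\tilde\psi := \Phi^{-1} \circ \psi$ is a solution of $\dot{x} = v_1(t,x)$ exactly when $\psi$ is a solution of $\dot{x} = v_2(t,x)$, and symmetrically for $\Phi$. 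Given these, the main chain is short: take any solution $\psi$ of $\dot{x} = v_2$ for which $\beta \circ \psi$ is defined, set $\tilde\psi := \Phi^{-1} \circ \psi$ (a solution of $\dot{x} = v_1$, by solution transport), note that $\alpha \circ \tilde\psi$ is defined (by graph preservation), and invoke $\alpha \in \mathcal{G}_1$ to conclude that $\alpha \circ \tilde\psi$ is a solution of $\dot{x} = v_1$; applying $\Phi$ one last time gives $\beta \circ \psi = \Phi \circ (\alpha \circ \tilde\psi)$, which by solution transport is a solution of $\dot{x} = v_2$.

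The main obstacle to a naive $\Phi \circ \alpha \circ \Phi^{-1}$-style calculation is bookkeeping of the partial \emph{is defined} condition in the transform $\alpha \circ \varphi$: in general $\alpha$ need not preserve the time coordinate, so $\alpha \circ \varphi$ may fail to be a function-graph even when $\varphi$ is smooth. The time-preserving hypothesis on $\Phi$ is doing real work here, since it guarantees that sandwiching $\alpha$ between $\Phi$ and $\Phi^{-1}$ neither creates nor destroys the defined-ness of the transform, so the symmetry property transfers cleanly from one equation to the other and conjugation by $\Phi$ does indeed give the claimed bijection between $\mathcal{G}_1$ and $\mathcal{G}_2$.
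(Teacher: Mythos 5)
Your proof is correct and follows the same overall strategy as the paper: show that $\Phi \circ \alpha \circ \Phi^{-1}$ is a symmetry of $\dot{x}=v_2(t,x)$ for each $\alpha \in \mathcal{G}_1$, and obtain the reverse inclusion by exchanging the roles of the two equations and replacing $\Phi$ by $\Phi^{-1}$. Where you differ is in how the key step is verified. The paper takes a solution $\varphi$ of $\dot{x}=v_2(t,x)$ and checks by an explicit chain-rule/pushforward computation that the second coordinate of the derivative of $t \mapsto \Phi \circ \alpha \circ \Phi^{-1}(t,\varphi(t))$ equals $v_2$ at the image point; you instead factor the same content through Theorem~\ref{thm:equivdiffeq} applied to the extended fields $1\frac{\partial}{\partial t}+v_j\frac{\partial}{\partial x}$ (your ``solution transport'') together with the defining property of $\alpha \in \mathcal{G}_1$. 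These are mathematically equivalent, but your version buys something the paper's does not: by isolating the graph-preservation property of time-preserving diffeomorphisms, you track the partial ``is defined'' condition in the definition of a global symmetry explicitly, showing that $\beta\circ\psi$ is defined exactly when $\alpha\circ(\Phi^{-1}\circ\psi)$ is. The paper's computation tacitly assumes that the image curve is again parametrizable by $t$ and works with a solution defined on all of $I$, even though solutions are allowed to live on subintervals $J \subseteq I$; this is precisely the point where a non-time-preserving $\alpha$ could cause trouble, so your more careful bookkeeping is a genuine tightening of the argument rather than a mere stylistic variant.
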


\begin{proof}
Let $\alpha \colon I \times M_1 \rightarrow I \times M_1$ be a symmetry of equation 
\[
 \dot{x}=v_1(t, x). 
\]
We will prove that $\Phi \circ \alpha \circ \Phi^{-1} \colon I \times M_2 \rightarrow I \times M_2$ is a symmetry of the equation 
\[
 \dot{x}=v_2(t, x). 
\]
Indeed, 
assume that the function $\varphi \colon I \to M_2$ is a solution of equation $\dot{x} = v_2(t, x)$, 
that is $\varphi'(t) = v_2\left(t, \varphi(t)\right)$ for every $t \in I$. 
Now, $t \mapsto \Phi \circ \alpha \circ \Phi^{-1} \circ \left(t, \varphi(t) \right)$ is a solution of $\dot{x} = v_2(t, x)$, 
since the second coordinate of its derivative is 
\begin{multline*}
 \left( \left( \Phi \circ \alpha \circ \Phi^{-1} \left( t, \varphi(t)\right) \right)' \right)_2 = 
\left( \Phi \circ \alpha \circ \Phi^{-1} \right)_{\ast (t,\varphi(t))} v_2\left( t, \varphi(t) \right) = \\
 \left( \Phi \circ \alpha \right)_{\ast \Phi^{-1}(t,\varphi(t))} \Phi^{-1}_{\ast \left( t, \varphi(t)  \right)} v_2\left( t, \varphi(t)  \right) 
= \left( \Phi \circ \alpha \right)_{\ast \Phi^{-1}(t,\varphi(t))} v_1\left( \Phi^{-1}\left( t, \varphi(t) \right) \right) \\
= \Phi _{\ast \alpha \circ \Phi^{-1}(t,\varphi(t))} \alpha_{\ast \Phi^{-1}\left( t, \varphi(t) \right)} v_1\left( \Phi^{-1}\left( t, \varphi(t) \right) \right) \\
= \Phi _{\ast \alpha \circ \Phi^{-1}(t,\varphi(t))} v_1\left( \alpha \circ \Phi^{-1}\left( t, \varphi(t) \right) \right) 
= v_2 \left( \Phi \circ \alpha \circ \Phi^{-1} \left( t, \varphi (t) \right) \right). 
\end{multline*}
Here, we applied the chain rule and Definition~\ref{dfn:f_ast} several times. 
Similarly, 
one can prove that if $\beta \colon I \times M_2 \rightarrow I \times M_2$ is a symmetry of equation 
\[
 \dot{x}=v_2(t, x), 
\]
then $\Phi^{-1} \circ \beta \circ \Phi \colon I \times M_1 \rightarrow I \times M_1$ is a symmetry of equation 
\[
 \dot{x}=v_1(t, x). 
\]
Thus, the symmetry groups of the equations 
\[
 \dot{x}=v_1(t, x) \quad \text{and} \quad 
 \dot{x}=v_2(t, x)
\]
are isomorphic, 
and they are conjugate by $\Phi$. 
\end{proof}

\begin{proof}[Proof of Theorem~\ref{thm:symmetry}]
By the Global rectification theorem (Theorem~\ref{thm:global}) there exists a time-preserving diffeomorphism $\Phi \colon I \times M \to I \times M$ sending the direction field 
$1\dfrac{\partial }{\partial t}+v\dfrac{\partial }{\partial x}$ into the direction field
$1\dfrac{\partial }{\partial t}+0\dfrac{\partial}{\partial x}$. 
Therefore by Lemma~\ref{lem:symmetry} we have that the symmetry group of equation~\eqref{eq:diffeq} is isomorphic to the symmetry group of the trivial equation
\begin{equation}\label{eq:trivial}
\dot{x} = 0. 
\end{equation}
The solutions of \eqref{eq:trivial} are the horizontal lines $\varphi(t) = c$ for some constant $c \in M$. 
Thus, $\alpha \colon I \times M \to I \times M$ is a symmetry of \eqref{eq:trivial} if and only if it preserves these horizontal lines, 
that is if and only if the second coordinate of $\alpha(t, x)$ depends only on $x$ and not on $t$. 
Thus, 
on the one hand, 
every global symmetry $\alpha \colon I \times M \to I \times M$ has the form $\alpha(t, x) =\left( f(t, x), g(x) \right)$. 
On the other hand, 
every function $h \in \mathcal{SYM} (I \times M)$ such that $h$ is of the form $h(t, x) =\left( f(t, x), g(x) \right)$ transfers solutions to solutions, 
since any (open) horizontal line is moved to another (open) horizontal line. 
Thus, 
the global symmetry group of the trivial equation is 
$\left\{\, h \in \mathcal{SYM} (I \times M) \mid h(t, x) =\left( f(t, x), g(x) \right),  t \in I, x \in M \,\right\}$, 
concluding the proof. 
\end{proof}

Recall the definition of the (smooth) wreath product (see e.g.\ \cite{MeldrumWreath}). 

\begin{dfn}
Let $A$ and $B$ be two transformation groups, 
where $A$ acts on $I$ and $B$ acts on $M$. 
Let $F$ contain all $M \to A$ functions, 
then $F$ is a group with pointwise multiplication. 
Now, $B$ acts naturally on $F$ in the following way: 
for arbitrary $g \in B$, $f \in F$ we have $f^g(x) = f \left(g^{-1}(x)\right)$. 
This action is in fact a $B \to \mathrm{Aut}(F)$ homomorphism. 
Let $W$ be the semidirect product $F \rtimes B$ with this action, 
that is 
\[
\left( f_1, g_1 \right) \left( f_2, g_2 \right) = \left( f_1^{g_2^{-1}}\circ f_2, g_1 \circ g_2 \right), \qquad (f_i, g_i) \in F \rtimes B. 
\]
Then $W$  is called the \emph{wreath product} of $A$ and $B$, 
and is denoted by $A \wr_M B$. 
The elements of $A \wr_M B$ act naturally on $I \times M$ by 
\[
(f, g) (t, x) = \left(f(x)(t) , g(x) \right), \qquad t \in I, x \in M. 
\]
When $A$ and $B$ are diffeomorphism groups, 
acting in a continuously differentiable manner on $I$ and $M$, 
and one takes $F$ only to contain the smooth $M \to A$ functions in this definition, 
then we call the resulting group the \emph{smooth wreath product}, 
and we denote it by $A \wr_{\mathcal{C}^1} B$. 
\end{dfn}

\begin{cor}\label{cor:symmetry}
 Let $I\subset \mathbb{R}$ be a nonempty open interval and 
$M\subset \mathbb{R}^{n}$ be an open set, 
$v \colon I\times M \to \mathbb{R}^n$ be a continuously differentiable function such that the conditions of Theorem~\ref{thm:global} are fulfilled. 
Then the global symmetry group $\mathcal{G}$ of the equation~\eqref{eq:diffeq} 
is isomorphic to the smooth wreath product of all diffeomorphisms of $I$ and all diffeomorphisms of $M$, that is 
\[
\mathcal{G} \simeq \mathcal{SYM} (I) \wr_{\mathcal{C}^1} \mathcal{SYM} (M). 
\]
\end{cor}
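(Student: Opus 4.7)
The plan is to apply Theorem~\ref{thm:symmetry} first, which identifies $\mathcal{G}$ with the group
\[
\mathcal{H} = \left\{\, h \in \mathcal{SYM}(I \times M) \mid h(t, x) = (f(t, x), g(x)) \,\right\},
\]
and then to exhibit an explicit isomorphism $\Psi \colon \mathcal{H} \to \mathcal{SYM}(I) \wr_{\mathcal{C}^1} \mathcal{SYM}(M)$ by sending $h$ to the pair $(F, g)$, where $F \colon M \to \mathcal{SYM}(I)$ is defined by $F(x)(t) = f(t, x)$. Since $f$ is $\mathcal{C}^1$ on $I \times M$, the associated map $(x,t) \mapsto F(x)(t)$ is $\mathcal{C}^1$, so $F$ will qualify as a smooth $M \to \mathcal{SYM}(I)$ function as soon as we know each $F(x)$ lies in $\mathcal{SYM}(I)$.

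The main step, and the expected chief obstacle, is verifying for every $h \in \mathcal{H}$ that $g \in \mathcal{SYM}(M)$ and that $F(x) \in \mathcal{SYM}(I)$ for every $x \in M$. Since $h$ is a $\mathcal{C}^1$ diffeomorphism, its Jacobian
\[
Dh = \begin{pmatrix} \partial_t f & \partial_x f \\ 0 & \partial_x g \end{pmatrix}
\]
is invertible pointwise, which forces $\partial_t f \neq 0$ and $\partial_x g$ invertible. Hence $g$ is a local diffeomorphism of $M$, each $F(x)$ is a local diffeomorphism of $I$, and surjectivity of $g$ onto $M$ follows at once from surjectivity of $h$. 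To handle the injectivity of $g$ together with the surjectivity of each $F(x)$ onto $I$, I fix $y_0 \in M$ and use that
\[
I \times \{y_0\} = h\left( \bigsqcup_{x \in g^{-1}(\{y_0\})} I \times \{x\} \right) = \bigsqcup_{x \in g^{-1}(\{y_0\})} F(x)(I) \times \{y_0\},
\]
a disjoint union in which each $F(x)(I)$ is a nonempty open subinterval of $I$ (open by the open-mapping property of local diffeomorphisms, an interval by connectedness of $I$). Connectedness of $I$ then forces $g^{-1}(\{y_0\})$ to be a singleton and each $F(x)(I)$ to equal $I$.

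Once this is done, the remaining work is routine. The map $\Psi$ is injective by construction, and surjective because the explicit inverse of $h(t, x) = (F(x)(t), g(x))$ is $(s, y) \mapsto (F(g^{-1}(y))^{-1}(s),\, g^{-1}(y))$, which is $\mathcal{C}^1$ by the implicit function theorem applied to the relation $F(x)(t) = s$ with $\partial_t F(x)(t) \neq 0$. Finally, the homomorphism property of $\Psi$ follows from the direct computation
\[
h_1 \circ h_2(t, x) = \bigl( F_1(g_2(x))(F_2(x)(t)),\, g_1(g_2(x)) \bigr),
\]
which matches the wreath product multiplication $(F_1, g_1) \cdot (F_2, g_2) = (F_1^{g_2^{-1}} \circ F_2,\, g_1 \circ g_2)$ upon unraveling $F_1^{g_2^{-1}}(x) = F_1(g_2(x))$.
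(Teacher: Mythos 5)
Your proof is correct and follows essentially the same route as the paper: apply Theorem~\ref{thm:symmetry} and then match the group $\{h(t,x)=(f(t,x),g(x))\}$ with the wreath product via the correspondence $h \leftrightarrow (F,g)$, $F(x)(t)=f(t,x)$, checking the homomorphism law by the same computation. The only difference is that you prove in detail (via the block-triangular Jacobian and the connectedness argument on $I\times\{y_0\}$) that every such $h$ really yields $g\in\mathcal{SYM}(M)$ and $F(x)\in\mathcal{SYM}(I)$ — a point the paper's proof simply asserts when it claims $\Psi$ is ``surjective onto the group determined in Theorem~\ref{thm:symmetry}'' — so your write-up is, if anything, more complete.
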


\begin{proof}
Let $A = \mathcal{SYM}(I)$, $B = \mathcal{SYM}(M)$, 
then $F = \left\{\, f \colon M \to A \mid f \in \mathcal{C}^1 \, \right\}$. 
The elements of $\mathcal{SYM} (I) \wr_{\mathcal{C}^1} \mathcal{SYM} (M)$ are pairs $(f, g)$ such that $f \in F$, $g \in B$. 
Let
\begin{align*}
\Psi \colon \mathcal{SYM} (I) \wr_{\mathcal{C}^1} \mathcal{SYM} (M) &\to \mathcal{SYM}(I \times M), \\
(f, g) &\mapsto h, 
\end{align*}
such that 
\[
h(t,x) = \left( f(x)(t), g(x)  \right). 
\]
Now, $\Psi$ is clearly injective, 
and is surjective onto the group determined in Theorem~\ref{thm:symmetry}. 
Thus, we only need to prove that $\Psi$ is a homomorphism. 
Let $f_1,f_2 \in F, g_1, g_2 \in \mathcal{SYM}(M)$ be arbitrary, 
and denote $\Psi (f_i, g_i)$ by $h_i$. 
Then 
\begin{multline*}
\Psi\left( f_1, g_1 \right) \circ \Psi \left( f_2, g_2 \right) (t, x) =  h_1 \circ h_2 \left(t, x \right) = h_1 \left( f_2(x)(t), g_2(x) \right) \\
= \left( f_1 \left(g_2(x) \right) \left(f_2 (x)(t) \right), g_1\left( g_2 \left( x \right) \right) \right) \\
= \Psi \left( f_1^{g_2^{-1}} \circ f_2, g_1 \circ g_2 \right) (t, x) = \Psi \left( (f_1, g_1) (f_2, g_2) \right) (t, x). 
\end{multline*}
\end{proof}

\section{Open problems}\label{sec:problems}

Several open questions remain. 
The most obvious one is probably how one can weaken the conditions of the Global rectification theorem (Theorem~\ref{thm:global}) so that its conclusion still holds. 
One way to obtain a more general global rectification is if one can prove a more general version of the Picard\,--\,Lindel\"{o}f theorem. 
However, considering that there are many possibilities to rectify a direction field, 
there may be different approaches. 

\begin{problem}
Give necessary and sufficient conditions, 
or at least weaker sufficient conditions under which Theorem~\ref{thm:global} holds. 
\end{problem}

Another natural problem is the study of the obtained global symmetry group $\mathcal{SYM}(I) \wr_{\mathcal{C}^1} \mathcal{SYM}(M)$. 
For example, 
it would be interesting to know what finite dimensional (or even finite) subgroups this group has. 

\begin{problem}
For an interval $I$ and a manifold $M$ what are the finite dimensional subgroups of $\mathcal{SYM}(I) \wr_{\mathcal{C}^1} \mathcal{SYM}(M)$? 
\end{problem}

\begin{problem}
For an interval $I$ and a manifold $M$ what are the finite subgroups of $\mathcal{SYM}(I) \wr_{\mathcal{C}^1} \mathcal{SYM}(M)$? 
\end{problem}

\section{Acknowledgements}
The research was partially supported by the European Union's Seventh Framework Programme (FP7/2007-2013) under grant agreement no.~318202. 
The first author was partially supported by the Hungarian Scientific Research Fund (OTKA) Grant no.~K111651.
The second author was partially supported by the János Bolyai Research Scholarship of the Hungarian Academy of Sciences, 
by the Hungarian Scientific Research Fund (OTKA) grant no.~K109185, 
and by the National Research, Development and Innovation Fund of Hungary, 
financed under the FK~124814 funding scheme.


\providecommand{\bysame}{\leavevmode\hbox to3em{\hrulefill}\thinspace}
\providecommand{\MR}{\relax\ifhmode\unskip\space\fi MR }
\providecommand{\MRhref}[2]{%
  \href{http://www.ams.org/mathscinet-getitem?mr=#1}{#2}
}
\providecommand{\href}[2]{#2}

\end{document}